\documentclass[10pt]{amsart}

\usepackage[utf8]{inputenc}
\usepackage[T1]{fontenc}

\usepackage{amsmath}
\usepackage{float}
\usepackage[all]{xy}
\usepackage{fancyhdr}
\usepackage{graphicx}

\usepackage{mathtools}
\usepackage{adjustbox}
\usepackage{enumitem}
\usepackage{stackrel}
\usepackage{scalerel}
\usepackage{euscript}
\usepackage{amsmath}
\usepackage{amsthm}
\usepackage{indentfirst}

\usepackage{mathpazo}
\usepackage{mathrsfs}
\usepackage{amsmath}
\usepackage{amssymb}
\usepackage{hyperref}
\usepackage{cleveref}
\usepackage{comment}
\usepackage{color}
\usepackage{tikz}
\usepackage{tikz-cd}
\usepackage{xparse}
\usetikzlibrary{matrix,arrows,decorations.pathmorphing}

\bibliographystyle{amsplain}

\newtheorem{thm}{Theorem}[section]
\newtheorem{prop}[thm]{Proposition}
\newtheorem{cor}[thm]{Corollary}
\newtheorem{lem}[thm]{Lemma}

\newtheorem{remark}[thm]{Remark}

\crefname{lem}{Lemma}{lemma}
\crefname{remark}{Remark}{remark}
\crefname{cor}{Corollary}{corollary}
\crefname{thm}{Theorem}{theorem}
\crefname{prop}{Proposition}{proposition}
\crefname{example}{Example}{example}
\crefname{defn}{Definition}{definition}
\crefname{notation}{Notation}{notation}
\crefname{appendix}{Appendix}{appendix}
\crefname{section}{Section}{section}

\newcommand{\AAA}{\mathcal A}

\newcommand{\NNN}{\mathcal N}
\newcommand{\KKK}{\mathcal K}
\newcommand{\BBB}{\mathcal B}
\newcommand{\HHH}{\mathcal H}
\newcommand{\LLL}{\mathcal L}

\newcommand\Nb {\mathbb{N}}

\newcommand\Cb {\mathbb{C}}


%


\usepackage{amsfonts} 
\DeclareMathSymbol{\shortminus}{\mathbin}{AMSa}{"39}
\usepackage{mathpazo}

\DeclareMathOperator{\Alg}{Alg}

\DeclareDocumentCommand{\lin}{m O{\cdot} O{\cdot}}{{}_{{#1}{ }} \langle #2, #3 \rangle}
\DeclareDocumentCommand{\rin}{m O{\cdot} O{\cdot}}{\langle #2, #3 \rangle_{{#1}{}} }

\DeclarePairedDelimiterX\braket[2]{\langle}{\rangle}{#1 \delimsize\vert #2}

\newcommand{\titleinfo}{Local derivation on some class of subspace lattice algebras}
\newcommand{\titleinfoshort}{local derivation}
\thanks{This work was   supported  by the National Natural Science Foundation of China (No. 12471124).}
\newcommand{\authorinfo}{}

\pagestyle{fancyplain}
\fancyhf{}
\fancyhf[HLE,HRO]{\titleinfoshort}
\fancyhf[HRE,HLO]{\authorinfo}
\fancyhf[FC]{\thepage}

\begin{document}

\title{\LARGE\textbf{\titleinfo}}

\author[Chen]{Hongjie Chen}

\address{School of Mathematical Sciences, Qufu Normal University, Qufu, Shandong, 273165, China}

\email{chenhongjiehei@163.com}

\author[Wang]{  Liguang Wang}
\address{School of Mathematical Sciences, Qufu normal University, Qufu 273165, China.}
\email{wangliguang0510@163.com}

\author[Yang]{ Zhujun Yang}
\address{School of Mathematical Sciences, Qufu Normal University, Qufu, Shandong, 273165, China}
\email{zjyang2100@163.com}

\begin{abstract}
Let $\HHH$ be a separable Hilbert space and $\LLL_{0}\subset\BBB(\HHH)$ a complete reflexive lattice. Let   $\mathscr{K}$ be the direct sum of   $n_0$ copies of $\HHH$ ($n_{0}\in\Nb$ and $n_0\geq 2$) or the direct sum of countably infinite many copies of $\HHH$ respectively.  We construct two class of subspace lattices  $\LLL$ on $\mathscr{K}$.  Let $\Alg\LLL$ be the corresponding subspace lattice algebra.  We show that every local derivation from $\Alg\LLL $ into $\BBB(\mathscr{K})$ is a derivation. 
\end{abstract}

\subjclass[2010]{47L75,46L10}
\keywords{Kadison-Singer algebras; Derivation; Local derivation.}
\date{}
\maketitle

\section{Introduction}

Let $\AAA$ be an algebra. A linear map $D: \AAA\to \AAA$ is said to be a  derivation  if
$$
D(ab) = D(a)b + aD(b)
$$ for all $a,b\in \AAA$. We call a  map
 $\delta: \AAA\to \AAA$   a  {local derivation} (see \cite{LoK}) if
for any $a$ in $\AAA$ there is a  derivation $D_a$ of $\AAA$, depending on $a$, such that
$
\delta(a) = D_a(a).
$
Every linear local derivation of a $C^\ast$-algebra is continuous (\cite{Johnson01}), and
indeed a derivation (\cite{shulman94}; see also \cite{bresar92}).
In particular, every  linear  local derivation $\delta$ on a von Neumann algebra  is  an inner derivation (\cite{Kad}). They are also many results on derivations and local derivations on 
non-self-adjoint operator algebras which is parallel to  
self-adjoint algebras.  We refer to \cite{LoHW,WuJ,WuJ2,ZhuJ}  
for more results on local derivations on non-self-adjoint operator algebras.

Now we recall some backgrounds of non-self-adjoint operator algebras. A subset $\LLL$ of subspaces of a Hilbert space $\HHH$ is called a subspace lattice if it contains $\{0\}$ and $\HHH$ and is complete in the sense that it is closed under the formation of arbitrary closed linear spans(denoted by $\vee$) and intersections(denoted by $\wedge$). Let $\LLL$ be a subspace lattice. The subspace lattice algebra $\Alg\LLL$ corresponds to $\LLL$ is defined by
$$
\Alg\LLL=\{A\in\BBB(\HHH):Ax\in E \ {\rm{for \ all \ }} x\in E \ {\rm{and \ }}E\in\LLL\}.
$$

For any $P\in\LLL$, the immediate predecessor and the immediate successor of $P$ are defined by
$$
P_{-}=\vee\{Q\in\LLL:Q\ngeqslant P\} \ {\rm{for}} \ P\neq 0, \ {\rm{and}} \ 0_{-}=0,
$$
and
$$
P_{+}=\wedge\{Q\in\LLL:Q\nleqslant P\} \ {\rm{for}} \ P\neq I, \ {\rm{and}} \ I_{+}=I
$$
respectively. If $P$ is also in other lattice $\LLL_{1}$, we will denote by $P_{-}^{\LLL_{1}}$ and $P_{+}^{\LLL_{1}}$ the "predecessor" and "successor" of $P$ in $\LLL_{1}$, respectively.

For a  subalgebra $\AAA$ of $\BBB(\HHH)$, ${\rm{Lat}}\AAA$ is  the lattice of subspaces of $\HHH$ that are left invariant by each operator in $\AAA$. An algebra $\AAA$ is reflexive if $\AAA=\Alg{\rm{Lat}}\AAA$, and a lattice $\LLL$ is reflexive if $\LLL={\rm{Lat}}\Alg\LLL$.

Let $\LLL$ be a subspace lattice. For convenience, we put
$$
{\rm{I_{d}}}(\Alg\LLL)={\rm{span}}\{T\in\Alg\LLL:T \ {\rm{is \ a \ idempotent \ operator}}\},
$$
and
$$
{\rm{R_1}}(\Alg\LLL)={\rm{span}}\{T\in\Alg\LLL:T \ {\rm{is \ of \ rank}} \ 1\}.
$$

Now we introduce some class of the reflexive subspace lattices that we will consider in this paper (see \cite{Wu}). The following notations will be fixed throughout this paper.

Let $\LLL_{0}\subset\BBB(\HHH)$ be a complete reflexive lattice. Let $n_{0}\in\Nb$ and $n_0\geq 2$. Let $\KKK_{n_{0}}=\HHH^{(n_{0})}$ be the direct sum of $n_0$ copies of $\HHH$. Suppose $$\zeta=(a_{1},\cdots,a_{n_{0}}) \in \Cb^{n_0}$$ is a unit vector  with $a_{i}\neq 0$ for any $1\leq i\leq n_{0}$. Let $Q_{\zeta}$ be the orthogonal projection of $\KKK_{n_{0}}$ onto the closed subspace spanned by  $\{(a_{1}x,\cdots,a_{n}x): x\in\HHH\}$. Let $\{E_{ij}\}$ be the standard matrix units in the matrix algebra $M_{n_0}(\Cb)$ and $F_{k}=\sum^{k}_{j=1}E_{jj}\otimes I$ for all $1\leq k\leq n_{0}$. Let $\LLL_{n_{0}}$ be the lattice generated by the projections $\{E_{11}\otimes P,Q_{\zeta},F_{k}:P\in\LLL_{0},2\leq k\leq n_{0}\}$. It is not difficult to see that
\begin{align}
\LLL_{n_{0}}=\{E_{11}\otimes P,Q_{\zeta},(E_{11}\otimes P)\vee Q_{\zeta},F_{k},F_{k}\vee Q_{\zeta}:P\in\LLL_{0},2\leq k\leq n_{0}\} \nonumber
\end{align}
and
\begin{align*}
\Alg\LLL_{n_{0}}&=\{\sum_{i=1,j\geq i}^{n_{0}}E_{ij}\otimes T_{ij}:T_{ij}\in\BBB(\HHH),T_{11}\in\Alg\LLL_{0},\\
&\sum_{j=1}^{n_{0}}\frac{a_{j}}{a_{1}}T_{1j}=\sum_{j=2}^{n_{0}}\frac{a_{j}}{a_{2}}T_{2j}=\cdots=T_{n_{0},n_{0}}\}.
\end{align*}
 
Now let $$\KKK_{\infty}=\oplus^{\infty}_{n=1}\HHH_{n}(=l^{2}(\Nb)\otimes\HHH)$$ with $\HHH_{n}=\HHH$ for any $n$. Suppose $$\eta=(b_{1},\cdots,b_{n},\cdots) \in l^{2}(\Nb)$$ is a unit vector  with $b_{n}\neq 0$ for any $n\geq 1$. Let $Q_{\eta}$ be the orthogonal projection of $\KKK_{\infty}$ onto the closed subspace $\{(b_{1}x,\cdots,b_{n}x,\cdots): x\in\HHH\}$. Let $\{E_{ij}\}$ be the standard matrix units of $\BBB(l^2(\Nb))$ and $E_{k}=\sum^{k}_{j=1}E_{jj}\otimes I$. Let $\LLL_{\infty}$ be the lattice generated by the projections $\{E_{11}\otimes P,Q_{\eta},F_{k}:P\in\LLL_{0},k\geq2\}$. Then we have
\begin{align}
\LLL_{\infty}=\{I,E_{11}\otimes P,Q_{\eta},(E_{11}\otimes P)\vee Q_{\eta},E_{k},E_{k}\vee Q_{\eta}:P\in\LLL_{0},k\geq 2\} \nonumber
\end{align}
and
\begin{align*}
\Alg\LLL_{\infty}&=\{\sum_{i=1,j\geq i}^{\infty}E_{ij}\otimes T_{ij}:T_{ij}\in\BBB(\HHH),T_{11}\in\Alg\LLL_{0},\\
&\sum_{j=1}^{\infty}\frac{a_{j}}{a_{1}}T_{1j}=\sum_{j=2}^{\infty}\frac{a_{j}}{a_{2}}T_{2j}=\cdots=\sum_{j=n}^{\infty}\frac{a_{j}}{a_{n}}T_{nj}=\cdots\}.
\end{align*}

It follows from \cite{Wu} that the lattices $\LLL_{n_{0}}$ and $ \LLL_{\infty}$ are reflexive subspace lattices when the lattice $\LLL_{0}$ is a reflexive subspace lattice.

For  simplicity, we will not distinguish an operator $T\in\BBB(\HHH)$ and its counterpart $E_{11}\otimes T\in\BBB(\KKK_{n_{0}})$ (respective, $\BBB(\KKK_{\infty})$) in the rest of this paper. We also identify  an orthogonal projection $P\in B(\KKK_{n_{0}})$ (respective, $\BBB(\KKK_{\infty})$) with its range $P(\KKK)$. We use $x\in P$ to indicate that a vector $x$ is in the range of a orthogonal projection $P$.

Our aim in this note is to show that local derivations on $\Alg\LLL_{n_0}$ and $\Alg\LLL_{\infty}$ are derivations.

\section{Main results}
We first describe the structure of the lattices given in section 1. 

\begin{prop} 
Let $\LLL_{n_{0}}$ be the  subspace lattice given in section 1. Then
\begin{itemize}
\item[(1)] $(E_{11}\otimes P)_{-}=(E_{11}\otimes P_{-}^{\LLL_{0}})\vee Q_{\zeta}$, $\forall P\in\LLL_{0}$;
\item[(2)] $(F_{k})_{-}=F_{k-1}\vee Q_{\zeta}$, $1<k<n_{0}$;
\item[(3)] $(Q_{\zeta})_{-}=F_{n_{0}-1}$;
\item[(4)] $I_{-}=I$, $0_{+}=0$.
\end{itemize}
\end{prop}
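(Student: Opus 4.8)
The plan is to read off every predecessor directly from the explicit description of $\LLL_{n_0}$ displayed in Section~1, by deciding, for each target element $R$, exactly which members of $\LLL_{n_0}$ fail to dominate $R$ and then computing the join of that collection; since $\LLL_{n_0}$ is finite modulo the copy of $\LLL_0$ sitting in the first coordinate, this is finite bookkeeping once the basic order relations are in hand. So first I would assemble a short dictionary of comparabilities, all of which reduce to concrete statements about ranges. Using the identification of a projection with its range, the elements are $E_{11}\otimes P=\{(x,0,\dots,0):x\in P\}$, the diagonal $Q_\zeta=\{(a_1x,\dots,a_{n_0}x):x\in\HHH\}$, the staircases $F_k=\{(y_1,\dots,y_k,0,\dots,0)\}$, and the two families of joins. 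The elementary facts to record are: $E_{11}\otimes P\leq E_{11}\otimes P'\iff P\leq P'$; $F_1=E_{11}\otimes I$, the $F_k$ are strictly increasing with $F_{n_0}=I$; $E_{11}\otimes P\leq F_k$ for every $k$; and $E_{11}\otimes P\wedge Q_\zeta=0$, $F_k\wedge Q_\zeta=0$ with $Q_\zeta\not\leq F_k$ and $F_k\not\leq Q_\zeta$ for $1\leq k\leq n_0-1$, while $Q_\zeta\leq F_{n_0}=I$. Each follows at once from $a_i\neq 0$ for all $i$.

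The only steps that require genuine care, and the place I expect the main obstacle, are the two containment statements involving the \emph{closed} spans $(E_{11}\otimes P')\vee Q_\zeta$, since a priori the closure could enlarge the intersection with a coordinate subspace. I would prove: \textbf{(a)} a vector $(w,0,\dots,0)$ lies in $(E_{11}\otimes P')\vee Q_\zeta$ if and only if $w\in P'$, equivalently $\big((E_{11}\otimes P')\vee Q_\zeta\big)\wedge F_1=E_{11}\otimes P'$, so that $E_{11}\otimes P\leq(E_{11}\otimes P')\vee Q_\zeta\iff P\leq P'$; and \textbf{(b)} for $n_0\geq 3$ the span $(E_{11}\otimes P')\vee Q_\zeta$ contains no nonzero vector supported in a single coordinate $j\geq 2$, hence $F_k\not\leq(E_{11}\otimes P')\vee Q_\zeta$ for $k\geq 2$. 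Both are proved the same way: if $(z_n+a_1x_n,a_2x_n,\dots,a_{n_0}x_n)$ converges to a vector with vanishing entries in all but one coordinate, then $a_ix_n\to 0$ for some index with $a_i\neq 0$ forces $x_n\to 0$, after which the surviving coordinate is forced to zero (in (b)) or forced into the closed subspace $P'$ (in (a)). This is exactly where $a_i\neq 0$ is used. I would also record that $F_{n_0-1}\vee Q_\zeta=I$, via the cancellation $(a_1x,\dots,a_{n_0}x)-(a_1x,\dots,a_{n_0-1}x,0)=(0,\dots,0,a_{n_0}x)$.

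With the dictionary in place each part is a join computation. For (1) (taking $P\neq 0$; the case $P=0$ is the convention $0_-=0$), every $F_k\geq F_1=E_{11}\otimes I\geq E_{11}\otimes P$, hence every $F_k$ and $F_k\vee Q_\zeta$ dominates $E_{11}\otimes P$; by (a) the members $E_{11}\otimes P'$ and $(E_{11}\otimes P')\vee Q_\zeta$ dominate $E_{11}\otimes P$ precisely when $P'\geq P$, while $Q_\zeta$ does not. So the non-dominating members are $E_{11}\otimes P'$ and $(E_{11}\otimes P')\vee Q_\zeta$ with $P'\not\geq P$, together with $Q_\zeta$, and their join is $\big(E_{11}\otimes\vee\{P':P'\not\geq P\}\big)\vee Q_\zeta=(E_{11}\otimes P_-^{\LLL_0})\vee Q_\zeta$ by the definition of $P_-^{\LLL_0}$. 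For (2), with $1<k<n_0$ (so $n_0\geq 3$), the non-dominating members are all $E_{11}\otimes P'$, all $(E_{11}\otimes P')\vee Q_\zeta$ (here (b) is essential), $Q_\zeta$, and $F_{k'},F_{k'}\vee Q_\zeta$ for $k'<k$, whose join is $F_{k-1}\vee Q_\zeta$. For (3), the members not dominating $Q_\zeta$ are the $E_{11}\otimes P$ and the $F_k$ with $k\leq n_0-1$ (everything containing $Q_\zeta$ is excluded, and $F_{n_0}=I$ does contain it), whose join is $F_{n_0-1}$. Finally (4) is immediate: $I_-=\vee\{Q:Q\neq I\}\geq F_{n_0-1}\vee Q_\zeta=I$, and $0_+=\wedge\{Q:Q\neq 0\}\leq(E_{11}\otimes I)\wedge Q_\zeta=0$.
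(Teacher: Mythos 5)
Your proof is correct and is exactly the direct verification from the definitions that the paper alludes to but omits ("the conclusion follows easily from the definitions"); in particular you rightly isolate the only nontrivial point, namely that forming the \emph{closed} span $(E_{11}\otimes P')\vee Q_{\zeta}$ does not enlarge its intersection with the coordinate subspaces, which is where $a_i\neq 0$ is genuinely used. Your aside that (1) must be read with the convention $0_-=0$ when $P=0$ (the displayed formula would otherwise give $Q_{\zeta}$) is a fair observation about the statement itself, not a gap in your argument.
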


\begin{proof}
The conclusion follows easily from the definitions and construction of the lattice $\LLL_{n_0}$. We omit the details.  
\end{proof}

Similar  results also hold for  $\LLL_{\infty}$  and we omit its proof.

\begin{prop}
Let $\LLL_{\infty}$ be the  subspace lattice given in section 1. Then
\begin{itemize}
\item[(1)] $(E_{11}\otimes P)_{-}=(E_{11}\otimes P_{-}^{\LLL_{0}})\vee Q_{\eta}$, $\forall P\in\LLL_{0}$;
\item[(2)] $(F_{k})_{-}=F_{k-1}\vee Q_{\eta}$, $k\geq 2$;
\item[(3)] $(Q_{\eta})_{-}=I$, $I_{-}=I$, $0_{+}=0$.
\end{itemize}
\end{prop}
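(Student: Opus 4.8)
The plan is to mimic the structure of the proof of the preceding proposition (for $\LLL_{n_0}$), since the claimed predecessor formulas for $\LLL_\infty$ are the direct analogues. The whole statement is a computation internal to the explicit description of $\LLL_\infty$ given in Section~1, so the strategy is to unwind the definitions of $P_-$ and, working inside the concrete list
$$
\LLL_{\infty}=\{I,E_{11}\otimes P,Q_{\eta},(E_{11}\otimes P)\vee Q_{\eta},E_{k},E_{k}\vee Q_{\eta}:P\in\LLL_{0},k\geq 2\},
$$
compare elements under the inclusion order and take joins. Recall that by definition $P_- = \vee\{Q\in\LLL_\infty : Q\ngeqslant P\}$, i.e.\ the join of all lattice elements that do \emph{not} dominate $P$.

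For part (1), I would fix $P\in\LLL_0$ and identify which elements of $\LLL_\infty$ fail to contain $E_{11}\otimes P$. The projections $E_k$ and $E_k\vee Q_\eta$ live in the first $k$ coordinates but are built from the full copies $E_{jj}\otimes I$, whereas $E_{11}\otimes P$ is supported only in the first coordinate on the subspace $P\subset\HHH$; the point is that $E_{11}\otimes Q\geqslant E_{11}\otimes P$ iff $Q\supseteq P$ in $\LLL_0$, and likewise for the joins with $Q_\eta$. Thus the elements failing to dominate $E_{11}\otimes P$ are exactly $E_{11}\otimes Q$ (and $(E_{11}\otimes Q)\vee Q_\eta$) with $Q\ngeqslant P$ in $\LLL_0$, together with $Q_\eta$ itself. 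Taking the join of all these and using the definition of $P_-^{\LLL_0}$ in $\LLL_0$ should collapse the expression to $(E_{11}\otimes P_-^{\LLL_0})\vee Q_\eta$. Part (2) is analogous but simpler: the elements not dominating $E_k$ are $E_{k-1}$, its join with $Q_\eta$, and the purely first-coordinate projections, whose join is $E_{k-1}\vee Q_\eta$.

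Part (3) is where the infinite-dimensional feature enters and is the step I expect to be the genuine obstacle, because it is exactly where the $\LLL_\infty$ statement departs from its finite counterpart (compare $(Q_\zeta)_-=F_{n_0-1}$ with the claimed $(Q_\eta)_-=I$). Here I must show that the only element of $\LLL_\infty$ failing to dominate $Q_\eta$ is $Q_\eta$ itself is false in the relevant direction: rather, I would check that the join of \emph{all} the proper elements not containing $Q_\eta$ — namely the ascending chain $\{E_k\}_{k\geq 2}$ together with the $E_{11}\otimes P$ — fills up all of $\KKK_\infty$. The intuition is that $Q_\eta$ is the graph of the vector $\eta=(b_1,b_2,\dots)$ with every $b_n\neq 0$, so it is genuinely ``spread across infinitely many coordinates''; no single $E_k$ contains it, yet $\vee_k E_k = I$. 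The care needed is that the join is a \emph{closed} span, so I would argue that $\overline{\bigvee_{k\geq 2} E_k} = I$ because $\bigcup_k F_k(\KKK_\infty)$ is dense in $\KKK_\infty$, which forces $(Q_\eta)_- = I$; this is precisely the infinite-dimensional phenomenon that has no analogue in $\LLL_{n_0}$, where the chain terminates at $F_{n_0-1}$. The remaining claims $I_-=I$ and $0_+=0$ are immediate from the boundary conventions $0_-=0$, $I_+=I$ recorded in Section~1. I would then remark, as the authors did for the finite case, that these verifications are routine given the explicit form of $\LLL_\infty$.
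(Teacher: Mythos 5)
Your proposal is correct and is exactly the definitional computation the paper has in mind: the authors omit this proof entirely (``Similar results also hold for $\LLL_{\infty}$ and we omit its proof''), and your unwinding of $P_{-}=\vee\{Q\in\LLL_{\infty}:Q\ngeqslant P\}$ inside the explicit list of $\LLL_{\infty}$, including the key observation that $\vee_{k}E_{k}=I$ forces $(Q_{\eta})_{-}=I$ in contrast to the finite case, supplies precisely the omitted details.
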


In the following we always assume that $\LLL\in \{\LLL_{n_0}, \LLL_{\infty}\}$. We also denote  $\mathscr{K}=\KKK_{n_{0}}$ when $\LLL=\LLL_{n_0}$ and $\mathscr{K}=\KKK_{\infty}$ when $\LLL=\LLL_{\infty}$.
 
\begin{prop}\label{rank 1}
${\rm{R_1}}(\Alg\LLL )\subset{\rm{I_d}}(\Alg\LLL )$ if and only if ${\rm{R_1}}(\Alg\LLL_{0})\subset {\rm{I_d}}(\Alg\LLL_{0})$.
\end{prop}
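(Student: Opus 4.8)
The plan is to combine Longstaff's description of the rank one operators of a reflexive algebra with the compression onto the $(1,1)$-corner. Recall that a rank one operator $e\otimes f^{*}$ (acting by $\xi\mapsto\langle\xi,f\rangle e$) belongs to $\Alg\LLL$ exactly when there is $L\in\LLL$ with $e\in L$ and $f\perp L_{-}$; taking $L$ to be the smallest element of $\LLL$ containing $e$, this reads $f\perp L_{-}$ for $L_{-}=\bigvee\{M\in\LLL:e\notin M\}$. Combined with the predecessors computed in the two preceding propositions, this lets me locate every rank one operator of $\Alg\LLL$. The second ingredient is the map $\phi(A)=E_{11}AE_{11}$: since every $A\in\Alg\LLL$ is block upper triangular one has $AE_{11}=E_{11}AE_{11}$, so $\phi$ is an algebra homomorphism of $\Alg\LLL$ onto $\Alg\LLL_{0}$ (identifying $E_{11}\otimes T_{11}$ with $T_{11}$). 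It has the homomorphism section $\psi(T)=E_{11}\otimes T-\frac{a_{1}}{a_{2}}E_{12}\otimes T$ (and the obvious analogue for $\LLL_{\infty}$), which one checks lands in $\Alg\LLL$, satisfies $\phi\circ\psi=\id$, and sends rank one operators to rank one operators and idempotents to idempotents.

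Granting these, the ``only if'' direction is immediate. Given $r\in{\rm{R_1}}(\Alg\LLL_{0})$, the operator $\psi(r)$ is a rank one operator in $\Alg\LLL$, hence lies in ${\rm{I_d}}(\Alg\LLL)$ by hypothesis; applying $\phi$, which carries idempotents to idempotents and therefore ${\rm{I_d}}(\Alg\LLL)$ into ${\rm{I_d}}(\Alg\LLL_{0})$, and using $\phi\psi=\id$, gives $r\in{\rm{I_d}}(\Alg\LLL_{0})$. Hence ${\rm{R_1}}(\Alg\LLL_{0})\subset{\rm{I_d}}(\Alg\LLL_{0})$.

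For the converse it suffices to place each rank one operator $R=e\otimes f^{*}\in\Alg\LLL$ into ${\rm{I_d}}(\Alg\LLL)$, since ${\rm{I_d}}(\Alg\LLL)$ is a linear subspace. If $\langle e,f\rangle\neq0$ then $R/\langle e,f\rangle$ is an idempotent in $\Alg\LLL$, so $R\in{\rm{I_d}}(\Alg\LLL)$ with nothing to prove. If $R$ is nilpotent I use the two-idempotent device: writing $L$ for the smallest element of $\LLL$ containing $e$, one has $f\perp L_{-}$, and provided $e\notin L_{-}$ one may pick $g\perp L_{-}$ with $\langle e,g\rangle=1$; then $e\otimes g^{*}$ and $e\otimes(g-f)^{*}$ are idempotents in $\Alg\LLL$ whose difference is $R$, so $R\in{\rm{I_d}}(\Alg\LLL)$. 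The predecessor formulas now decide the escape condition $e\notin L_{-}$. When the range vector $e$ has a nonzero component outside the first coordinate — the $Q_{\zeta}$- and $F_{k}$-type ranges — the descriptions $(Q_{\zeta})_{-}=F_{n_{0}-1}$ and $(F_{k})_{-}=F_{k-1}\vee Q_{\zeta}$ force $e\notin L_{-}$ automatically, so such $R$ always lie in ${\rm{I_d}}(\Alg\LLL)$, irrespective of the hypothesis. The only remaining case is $e=(e_{1},0,\dots,0)$ with $e_{1}\in P$: here $L=E_{11}\otimes\hat P$ for the smallest $\hat P\in\LLL_{0}$ containing $e_{1}$, and $(E_{11}\otimes\hat P)_{-}=(E_{11}\otimes\hat P_{-}^{\LLL_{0}})\vee Q_{\zeta}$ shows $e\in L_{-}$ if and only if $e_{1}\in\hat P_{-}^{\LLL_{0}}$ — exactly the escape condition for the corresponding rank one $\phi(R)=e_{1}\otimes f_{1}^{*}$ inside $\Alg\LLL_{0}$. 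Thus these first-coordinate rank one operators are governed by $\Alg\LLL_{0}$, and the hypothesis is what supplies $e_{1}\notin\hat P_{-}^{\LLL_{0}}$. For $\LLL_{\infty}$ the same scheme applies, with the simplification that $(Q_{\eta})_{-}=I$ removes the $Q_{\eta}$-type ranges altogether and $E_{k}$ replaces $F_{k}$.

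The main obstacle is to justify that rank one membership in ${\rm{I_d}}$ is controlled exactly by the two-idempotent device, i.e. by the escape condition $e\notin L_{-}$: the device only ever produces differences of two idempotents, so I must show that a nilpotent rank one operator with $e\in L_{-}$ cannot be written as \emph{any} linear combination of idempotents (a separation argument), thereby turning the hypothesis ${\rm{R_1}}(\Alg\LLL_{0})\subset{\rm{I_d}}(\Alg\LLL_{0})$ into the clean statement $e_{1}\notin\hat P_{-}^{\LLL_{0}}$ for every first-coordinate rank one. Subordinate to this are the bookkeeping of the coupling constraint $\sum_{i}\overline{a_{i}}g_{i}=0$ (respectively its $\LLL_{\infty}$ form) when choosing $g$ inside $\Alg\LLL$, the verification that the listed lattice elements exhaust the possible ranges of rank one operators, and the separate but routine treatment of the join-type elements $(E_{11}\otimes P)\vee Q_{\zeta}$ and $F_{k}\vee Q_{\zeta}$.
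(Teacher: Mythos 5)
Your forward direction (lift a rank one of $\Alg\LLL_{0}$ via $\psi$, decompose in $\Alg\LLL$, compress back via $\phi$) is correct and is essentially the paper's argument. The converse, however, contains a genuine gap, and it is precisely the step you flag as ``the main obstacle.'' You try to force every nilpotent rank one through the two-idempotent device, which requires the escape condition $e\notin L_{-}$; for ranges of type $Q_{\zeta}$ and $F_{k}$ that condition is automatic, but for $L=E_{11}\otimes\hat P$ you need $e_{1}\notin\hat P_{-}^{\LLL_{0}}$, and you propose to extract this from the hypothesis via a separation claim: that a nilpotent rank one with range vector inside $L_{-}$ can never lie in the span of idempotents. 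That claim is left unproven, and the strategy misreads what the hypothesis provides: ${\rm{R_1}}(\Alg\LLL_{0})\subset{\rm{I_d}}(\Alg\LLL_{0})$ allows arbitrary finite linear combinations of (possibly high-rank) idempotents, not merely differences of two rank-one idempotents, so it does not entail the escape condition in $\LLL_{0}$. If it did, the proposition would be vacuous exactly where the hypothesis does real work, and the authors' main theorem would not need the separate assumption $P\nless P_{-}^{\LLL_{0}}$.

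The paper sidesteps this entirely. For $L=E_{11}\otimes\hat P$ it splits $x\otimes f=H+(x\otimes f-H)$, where $H=\psi(T_{11})$ is the lift of the $(1,1)$-corner: the hypothesis gives $T_{11}=\sum_{i}k_{i}S_{i}$ with each $S_{i}$ idempotent in $\Alg\LLL_{0}$, each $\psi(S_{i})$ is an idempotent of $\Alg\LLL$, hence $H\in{\rm{I_d}}(\Alg\LLL)$. The remainder $N=x\otimes f-H$ is supported in the first row with vanishing $(1,1)$-entry, and with $E=E_{11}\otimes I-\frac{a_{1}}{a_{2}}E_{12}\otimes I$ one checks $EN=N$, $NE=0$, $N^{2}=0$, so $E\pm N$ are idempotents of $\Alg\LLL$ and $N=\frac{1}{2}\bigl((E+N)-(E-N)\bigr)\in{\rm{I_d}}(\Alg\LLL)$. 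You should replace your separation step with this splitting; note also that it automatically covers the sub-case $f_{1}=0$ (where $\phi(R)=0$ and your ``governed by $\Alg\LLL_{0}$'' reduction gives no information), as well as the join-type ranges you deferred as routine.
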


\begin{proof} We only prove the case when $\LLL=\LLL_{n_0}$ since the case when $\LLL=\LLL_{\infty}$ is similar. 

($\Rightarrow$) Let $x\otimes f\in\Alg\LLL_{0}$ be a rank 1 operator in $\Alg\LLL_{0}$. Then we have
$$
T=\left(\begin{array}{ccccc}
x\otimes f&-\frac{a_{1}}{a_{2}}x\otimes f&0&\cdots&0\\
0&0&0&\cdots&0\\
\vdots&\vdots&\vdots&\ddots&\vdots\\
0&0&0&\cdots&0
\end{array}\right)\in\Alg\LLL.
$$
By the hypothesis, there is some $k\in\Nb$, $l_{1},\cdots,l_{k}\in\Cb$ and $T_{1},\cdots,T_{k}\in{\rm{I_d}}(\Alg\LLL)$ such that $T=l_{1}T_{1}+\cdots+l_{k}T_{k}$. It follows that $T_{i}F_{1}\in\Alg\LLL_{0}$ is idempotent operator for any $i=1,\cdots,k$ and $x\otimes f=l_{1}T_{1}F_{1}+\cdots+l_{k}T_{k}F_{k}$. Therefore, $x\otimes f\in{\rm{I_d}}(\Alg\LLL_{0})$.

($\Leftarrow$) Let $x\otimes f\in\Alg\LLL$ be a rank 1 operator. We have
$$
(x\otimes f)(x\otimes f)=\langle x,f\rangle x\otimes f.
$$

If $\langle x,f \rangle\neq0$, then $x\otimes f$ is a nonzero multiple of an idempotent operator.

Suppose that $\langle x,f \rangle=0$. Since $x\otimes f\in\Alg\LLL$, there is some $2\leq k\leq n_{0}-1$ or $P\in\LLL_{0}$ such that $x\in F_{k}$ and $f\in (F_{k})_{-}^{\perp}$, or alternatively, $x\in E_{11}\otimes P$ and $f\in (E_{11}\otimes P)_{-}^{\perp}$, or else$x\in Q_{\zeta}$ and $f\in (Q_{\zeta})_{-}^{\perp}$.

Case 1: If $x\in E_{11}\otimes P$ and $f\in (E_{11}\otimes P)_{-}^{\perp}$. Let
$$
x\otimes f=\left(\begin{array}{cccc}
T_{11}&T_{12}&\cdots&T_{1,n_{0}}\\
0&0&\cdots&0\\
\vdots&\vdots&\ddots&\vdots\\
0&0&\cdots&0
\end{array}\right)
$$
and
$$
H=\left(\begin{array}{ccccc}
T_{11}&-\frac{a_{1}}{a_{2}}T_{11}&0&\cdots&0\\
0&0&0&\cdots&0\\
\vdots&\vdots&\vdots&\ddots&\vdots\\
0&0&0&\cdots&0
\end{array}\right).
$$
Then $T_{11}\in\Alg\LLL_{0}$ is a rank 1 operator and $H\in\Alg\LLL$. Since ${\rm{R_1}}(\Alg\LLL_{0})\subset {\rm{I_d}}(\Alg\LLL_{0})$, then there is some $s\in\Nb$ such that $k_{1},\cdots,k_{s}\in\Cb$ and idempotents $S_{1},\cdots,S_{s}\in\Alg\LLL_{0}$ such that $T_{11}=k_{1}S_{1}+\cdots+k_{s}S_{s}$. Let
$$
H_{i}=\left(\begin{array}{cccc}
S_{i}&-\frac{a_{1}}{a_{2}}S_{i}&\cdots&0\\
0&0&\cdots&0\\
\vdots&\vdots&\ddots&\vdots\\
0&0&\cdots&0
\end{array}\right), \ \forall i=1,\cdots,s.
$$
Then $H_{i}\in\Alg\LLL$ is idempotent operator for all $i=1,\cdots,s$ and
$$
H=k_{1}H_{1}+\cdots+k_{s}H_{s}\in{\rm{I_d}}(\Alg\LLL).
$$
On the other hand, we have
$$
x\otimes f-H=\left(\begin{array}{ccccc}
0&\frac{a_{1}}{a_{2}}T_{11}+T_{12}&T_{13}&\cdots&T_{1,n_{0}}\\
0&0&0&\cdots&0\\
\vdots&\vdots&\vdots&\ddots&\vdots\\
0&0&0&\cdots&0
\end{array}\right)\in\Alg\LLL.
$$
Hence $E_{11}\otimes I-\frac{a_{1}}{a_{2}}E_{12}\otimes I+(x\otimes f-H), \ E_{11}\otimes I-\frac{a_{1}}{a_{2}}E_{12}\otimes I-(x\otimes f-H)\in\Alg\LLL$ are idempotent operators, and $x\otimes f-H\in{\rm{I_d}}(\Alg\LLL)$. Therefore, $x\otimes f$ is contained in the linear span of the idempotent operators of $\Alg\LLL$.

Case 2: If $x\in F_{k}$ and $f\in (F_{k})_{-}^{\perp}$. Without loss of generality, we assume that $E_{kk}x\neq0$, otherwise, we may scrutinize the scenario of $k-1$, decrementing $k$ until it attains the value $k=1$, at which juncture it falls under Case 1.

Since $F_{k}\nleqslant (F_{k})_{-}=F_{k-1}\vee Q_{\zeta}$, there is some $g\in (F_{k})_{-}^{\perp}$ such that $\langle x,g \rangle=1$. Otherwise, $x\in (F_{k})_{-}\wedge F_{k}=(F_{k-1}\vee Q_{\zeta})\wedge F_{k}=F_{k-1}$, which is a contradiction. It is clearly that $x\otimes (f-g),x\otimes (f+g)\in\Alg\LLL$ are two nonzero multiple of idempotent operators and $x\otimes f=\frac{1}{2}x\otimes (f-g)+\frac{1}{2}x\otimes (f+g)$ is contained in the linear span of the idempotent operators of $\Alg\LLL$.

Case 3: If $x\in Q_{\zeta}$ and $f\in (Q_{\zeta})_{-}^{\perp}=E_{n_{0},n_{0}}$, then there is some $g\in E_{n_{0},n_{0}}$ such that $\langle x,g\rangle=1$. Thus $x\otimes (f-g),x\otimes (f+g)\in\Alg\LLL$ are two nonzero idempotent operators and $x\otimes f$ is contained in the linear span of the idempotent operators of $\Alg\LLL$.

\end{proof}

\begin{lem}\label{finitedecomposable}(see \cite{Chen}, \cite{Yang})
$\Alg\LLL$ is decomposable if and only if $\Alg\LLL_{0}$ is decomposable.
\end{lem}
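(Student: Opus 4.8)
The plan is to reduce everything to the base algebra $\Alg\LLL_{0}$ through the corner $F_{1}=E_{11}\otimes I$, mirroring the embedding/compression device already used in the proof of Proposition~\ref{rank 1}. The key observation is that compression by $F_{1}$ sets up a two-way correspondence between $\Alg\LLL$ and $\Alg\LLL_{0}$: for $T=\sum_{i\leq j}E_{ij}\otimes T_{ij}\in\Alg\LLL$ one has $F_{1}TF_{1}=E_{11}\otimes T_{11}$ with $T_{11}\in\Alg\LLL_{0}$, while conversely every $T_{0}\in\Alg\LLL_{0}$ occurs as the $(1,1)$-entry of some element of $\Alg\LLL$. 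Since the decomposability notion of \cite{Chen,Yang} is a linear/structural property of the algebra built from a distinguished family of generators, both implications should follow by transporting a decomposition across this correspondence. I would carry out $\LLL=\LLL_{n_{0}}$ in full and then indicate the routine modifications for $\LLL_{\infty}$.

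For the implication that decomposability of $\Alg\LLL_{0}$ forces decomposability of $\Alg\LLL$, I would start from an arbitrary $T=\sum_{i\leq j}E_{ij}\otimes T_{ij}\in\Alg\LLL$ and note that the only entry constrained to the base algebra is $T_{11}\in\Alg\LLL_{0}$, the remaining entries lying freely in $\BBB(\HHH)$ subject only to the linear relations $\sum_{j}\frac{a_{j}}{a_{1}}T_{1j}=\cdots=T_{n_{0}n_{0}}$. Decomposing $T_{11}$ inside $\Alg\LLL_{0}$ and lifting each summand to $\Alg\LLL$ by the same recipe used for the operators $H_{i}$ in Proposition~\ref{rank 1} handles the $(1,1)$-corner; the complementary part of $T$, whose $(1,1)$-entry vanishes, then has essentially unconstrained entries and can be decomposed directly. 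Combining the two pieces yields the desired decomposition of $T$.

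For the converse, given $T_{0}\in\Alg\LLL_{0}$ I would embed it in $\Alg\LLL$ as the matrix with $(1,1)$-entry $T_{0}$, $(1,2)$-entry $-\frac{a_{1}}{a_{2}}T_{0}$, and all other entries zero, which lies in $\Alg\LLL$ by the explicit description of $\Alg\LLL_{n_{0}}$; then I would apply decomposability of $\Alg\LLL$ to it and compress the resulting decomposition back by $F_{1}$. The main obstacle I anticipate is bookkeeping: one must check that the embedding and the compression carry the distinguished generators of one algebra to admissible building blocks of the other, that the off-diagonal linear relations are respected throughout, and, for $\LLL_{\infty}$, that the infinite chains of relations and any convergence implicit in the decomposition are controlled. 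Verifying that membership in $\Alg\LLL$ (respectively $\Alg\LLL_{0}$) is preserved at each step is where the real work lies; once that is in place, both implications are immediate.
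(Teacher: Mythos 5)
The paper offers no proof of this lemma at all---it is imported from \cite{Chen} and \cite{Yang}---so your argument has to stand on its own. Here ``decomposable'' must mean what the paper uses it for in Corollary \ref{finite rank}: every finite rank operator of the algebra is a finite sum of rank one operators \emph{belonging to the algebra}. With that reading, your converse direction is essentially sound: embed $T_{0}$ as $E_{11}\otimes T_{0}-\frac{a_{1}}{a_{2}}E_{12}\otimes T_{0}$, decompose in $\Alg\LLL$, and compress both sides by $F_{1}=E_{11}\otimes I$; each compressed summand has rank at most one and its $(1,1)$-entry lies in $\Alg\LLL_{0}$ because the $(1,1)$-entry of every element of $\Alg\LLL$ does. (Note $F_{1}\notin\Alg\LLL$, so the compression takes place in $\BBB(\mathscr{K})$, which is harmless here.)

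The genuine gap is in the forward direction, at exactly the step you pass over. After subtracting the lifts $H_{i}$ of a rank one decomposition of $T_{11}$, you are left with a finite rank element of $\Alg\LLL$ whose $(1,1)$-entry vanishes, and you assert that its entries are ``essentially unconstrained'' and can be ``decomposed directly.'' They are not unconstrained: all rows remain coupled through the chain of identities $\sum_{j\geq i}\frac{a_{j}}{a_{i}}T_{ij}=T_{n_{0},n_{0}}$, and---more to the point---a rank one operator $x\otimes f$ belongs to $\Alg\LLL$ only when $x\in E$ and $f\in E_{-}^{\perp}$ for some $E\in\LLL$ (the criterion the paper itself invokes in Proposition \ref{rank 1}), so the admissible rank one summands are only those built from $F_{k}$, $Q_{\zeta}$ and $E_{11}\otimes P$ together with their predecessors, e.g.\ $x\in F_{k}$ with $f\perp F_{k-1}\vee Q_{\zeta}$, or $x\in Q_{\zeta}$ with $f\in E_{n_{0},n_{0}}$. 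Proving that every finite rank element with vanishing $(1,1)$-corner is a sum of such operators is the actual content of the lemma---it is what the cited references establish---and nothing in your outline supplies it; for $\LLL_{\infty}$ the same step must be carried out with infinitely many coupled entries, which you flag but do not resolve. A secondary issue: if decomposability is taken in the stronger form the paper tacitly uses (a rank $n$ operator is a sum of exactly $n$ rank ones), your two-stage splitting into the $(1,1)$-part and its complement produces too many summands and would have to be reorganized.
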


\begin{lem}\label{finiteidempotent}(see \cite{LoHL}) Suppose $\mathscr{H}$ is a complex Hilbert space. 
Let $\AAA$ be a unital subalgebra of $\BBB(\mathscr{H})$ and $\delta$ a local derivation from $\AAA$ into $\BBB(\mathscr{H})$. Then
$$
\delta(MAN)=M\delta(AN)+\delta(MA)N-M\delta(A)N
$$
for any $A,M,N\in\AAA$ and $M,N$ idempotents.
\end{lem}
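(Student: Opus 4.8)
The plan is to exploit that, although $\delta$ is not multiplicative, it is completely determined pointwise by genuine derivations, and that genuine derivations interact with idempotents in a rigid way. The starting observations are that $\delta(I)=0$ (since $\delta(I)=D_I(I)$ and every derivation kills the unit $I$ of $\AAA$), and that for every idempotent $E\in\AAA$ the element $\delta(E)$ is \emph{off-diagonal} with respect to the Peirce decomposition of $E$, i.e. $E\delta(E)E=0$ and $(I-E)\delta(E)(I-E)=0$. Indeed, writing $\delta(E)=D_E(E)$ for a derivation $D_E$ and applying the Leibniz rule to $E=E^2$ gives $D_E(E)=D_E(E)E+ED_E(E)$; sandwiching this identity by $E$ yields $E\delta(E)E=2E\delta(E)E$, hence $E\delta(E)E=0$, and sandwiching by $I-E$ kills the right-hand side entirely. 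This off-diagonal property is essentially the only concrete handle the hypothesis supplies, so the whole argument must be arranged to use exactly it.

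The engine of the proof is the elementary fact that if $E$ is an idempotent and $F=I-E$, then both $E+EXF$ and $F+FXE$ are again idempotents for \emph{every} $X\in\AAA$, the cross terms vanishing because $FE=0$, $EF=0$, and $(EXF)^2=0$. First I would apply the off-diagonal property of $\delta$ to the manufactured idempotents $M+MX(I-M)$ and $(I-M)+(I-M)XM$, and, using linearity of $\delta$ together with the already-known off-diagonal form of $\delta(M)$, read off identities controlling the off-diagonal corners of $\delta(MX)$ against those of $M\delta(X)$ and $\delta(M)X$. Carrying this out on the left with $M$, and symmetrically on the right with $N$ via the idempotents $N+(I-N)XN$ and the like, produces one-sided comparison formulas for $\delta(MB)$ and for $\delta(BN)$ that are valid after compression into the appropriate Peirce corners.

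Finally I would assemble these one-sided identities. Decomposing $A$ with respect to $M$ on the left and $N$ on the right and substituting the corner formulas into each of the four terms $\delta(MAN)$, $M\delta(AN)$, $\delta(MA)N$, $M\delta(A)N$, one finds that the right-hand side of the asserted formula is precisely the combination in which the uncontrolled \emph{diagonal} contributions --- those a single derivation would produce as $D(M)$ and $D(N)$, and which genuinely differ from one evaluation point to another for a merely local derivation --- cancel in pairs, leaving $\delta(MAN)$. The main obstacle is exactly this cancellation: the four $\delta$-values are governed by four \emph{different} associated derivations $D_{MAN}$, $D_{AN}$, $D_{MA}$, $D_{A}$, so there is no single Leibniz rule to invoke, and one must check by careful bookkeeping across both Peirce decompositions that the manufactured-idempotent identities are strong enough to force the diagonal cross terms to match. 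Once the corner identities of the previous step are in hand this is a finite verification, and it is the specific combination $M\delta(AN)+\delta(MA)N-M\delta(A)N$, rather than any one-sided expression, that makes it close.
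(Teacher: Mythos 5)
Your preliminary observations are all correct: $\delta(I)=0$, the Peirce off-diagonality $E\delta(E)E=(I-E)\delta(E)(I-E)=0$ for idempotents $E\in\AAA$, and the fact that $E+EXF$ and $F+FXE$ are again idempotents. The gap is in the step where you claim to ``read off identities controlling the off-diagonal corners of $\delta(MX)$ against those of $M\delta(X)$ and $\delta(M)X$'' from the manufactured idempotents, and in the final ``cancel in pairs'' assertion. If you actually expand $(I-G)\delta(G)(I-G)=0$ for $G=M+R$ with $R=MXM'$ and $M'=I-M$, using $\delta(G)=\delta(M)+\delta(R)$ together with $M'R=0=RM$ and $MR=R=RM'$, the surviving relation after left multiplication by $M'$ is $M'\delta(R)M'=M'\delta(M)R+M'\delta(R)R$, which still contains the uncontrolled cross term $M'\delta(R)MXM'$; the analogous expansion of $G\delta(G)G=0$ produces terms such as $R\delta(R)M$. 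These are not one-sided comparison formulas relating $\delta(MX)$ to $M\delta(X)$ and $\delta(M)X$, and you give no mechanism by which the residual terms cancel when the four evaluations $\delta(MAN)$, $\delta(AN)$, $\delta(MA)$, $\delta(A)$ --- each governed by its own derivation --- are finally assembled. So the proof as proposed does not close.

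Note that the paper itself does not prove this lemma (it cites Hadwin--Li), and the standard argument is both simpler and structurally different from yours: it needs neither the manufactured idempotents nor the off-diagonal lemma. Set $\Delta=\delta(MAN)-M\delta(AN)-\delta(MA)N+M\delta(A)N$ and write $\Delta=M\Delta N+M\Delta N'+M'\Delta N+M'\Delta N'$ with $M'=I-M$, $N'=I-N$. Linearity of $\delta$ collapses each corner to a \emph{single} evaluation: $M\Delta N=M\delta(M'AN')N$, $M\Delta N'=-M\delta(M'AN)N'$, $M'\Delta N=-M'\delta(MAN')N$, $M'\Delta N'=M'\delta(MAN)N'$. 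Each of these vanishes by applying the Leibniz rule of the one derivation attached to that one element: for instance, for $X=M'AN'$ one has $X=M'XN'$, hence $D_X(X)=D_X(M')XN'+M'D_X(X)N'+M'XD_X(N')$, and compressing by $M(\cdot)N$ kills every term because $MM'=0$ and $N'N=0$; the other three corners are handled identically using $X=M'X$, $XN'=0$, etc. The obstacle you correctly identified --- four different associated derivations --- is resolved by linearity turning each corner into a single $\delta$-value to which one Leibniz rule applies; that is precisely the idea missing from your proposal.
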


\begin{lem}\label{I-0}  Suppose $\mathscr{H}$ is a complex Hilbert space. 
Let $\delta$ be a local derivation from $\Alg\LLL$ into $\BBB(\mathscr{H})$. Then $\delta(I)=0$.
\end{lem}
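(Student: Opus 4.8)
The plan is to reduce the statement to the elementary fact that every derivation of a unital algebra annihilates the identity, and then apply the defining property of a local derivation at the single point $a = I$. First I would record that $\Alg\LLL$ is unital: it contains $I$ (concretely $I = \sum_{j} E_{jj}\otimes I$ in either of the two constructions), and $\BBB(\mathscr{H})$ is a unital bimodule over $\Alg\LLL$, meaning that left and right multiplication by $I$ act as the identity on every element of $\BBB(\mathscr{H})$. This is the only structural input needed.

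Next, invoking the definition of local derivation for the specific element $I\in\Alg\LLL$, there exists a derivation $D_{I}$ from $\Alg\LLL$ into $\BBB(\mathscr{H})$, depending on $I$, such that $\delta(I) = D_{I}(I)$. Applying the Leibniz rule to the factorization $I = I\cdot I$ gives $D_{I}(I) = D_{I}(I\cdot I) = D_{I}(I)\,I + I\,D_{I}(I)$. Since $I$ acts as the identity on both sides in $\BBB(\mathscr{H})$, the right-hand side equals $2\,D_{I}(I)$, so that $D_{I}(I) = 2\,D_{I}(I)$, forcing $D_{I}(I) = 0$. Consequently $\delta(I) = D_{I}(I) = 0$, which is the desired conclusion.

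I do not expect any genuine obstacle here: the argument is the standard observation that a derivation kills the unit, transported verbatim to the local setting because the local derivation property lets us replace $\delta$ by an honest derivation at the element $I$. The only points worth stating explicitly are that $\Alg\LLL$ contains $I$ and that the $\Alg\LLL$-bimodule $\BBB(\mathscr{H})$ is unital, both of which are immediate from the descriptions of $\Alg\LLL_{n_0}$ and $\Alg\LLL_{\infty}$ in Section~1. No use of \Cref{rank 1}, \Cref{finitedecomposable}, or \Cref{finiteidempotent} is required for this particular lemma; those results will instead feed into the later theorems.
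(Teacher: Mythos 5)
Your argument is exactly the paper's proof: pick the derivation $D_I$ witnessing $\delta(I)=D_I(I)$, apply the Leibniz rule to $I=I\cdot I$ to get $D_I(I)=2D_I(I)$, and conclude $\delta(I)=0$. Correct and identical in approach; the extra remarks about unitality of $\Alg\LLL$ and of the bimodule $\BBB(\mathscr{H})$ are fine but not needed beyond what the paper already takes for granted.
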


The proof is well known. For completeness, we give the details. 

\begin{proof}
By calculation, we have
$$
\delta(I)=\delta_{I}(I)=\delta_{I}(I\cdot I)=\delta_{I}(I)I+I\delta_{I}(I)=2\delta_{I}(I),
$$
thus $\delta(I)=0$.
\end{proof}

\begin{cor}\label{finite rank}
Let $\Alg\LLL_{0}$ be a decomposable subspace lattice algebra and ${\rm{R_1}}(\Alg\LLL_{0})\subset {\rm{I_d}}(\Alg\LLL_{0})$. Let $\delta$ be a local derivation from $\Alg\LLL$ into $\BBB(\mathscr{K})$. Then
$$
\delta(MAN)=M\delta(AN)+\delta(MA)N-M\delta(A)N
$$
for any $A,M,N\in\Alg\LLL $ and $M,N$ are finite rank operators.
\end{cor}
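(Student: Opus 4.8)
The plan is to bootstrap from \Cref{finiteidempotent}, which already gives the desired formula when both $M$ and $N$ are idempotents, to the case where $M$ and $N$ are arbitrary finite rank operators. The bridge between these two cases is precisely \Cref{rank 1} together with the decomposability hypothesis: under the assumptions $\mathrm{R_1}(\Alg\LLL_0)\subset\mathrm{I_d}(\Alg\LLL_0)$ and decomposability of $\Alg\LLL_0$, \Cref{rank 1} and \Cref{finitedecomposable} tell us that $\mathrm{R_1}(\Alg\LLL)\subset\mathrm{I_d}(\Alg\LLL)$ and that $\Alg\LLL$ is decomposable, so every finite rank operator in $\Alg\LLL$ is a finite linear combination of idempotents in $\Alg\LLL$. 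The strategy, then, is a two-stage linearity argument: first extend the formula from idempotent $M,N$ to \emph{finite linear combinations} of idempotents, and then identify such combinations with the general finite rank operators.

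First I would fix the notation. Let $A\in\Alg\LLL$ be arbitrary, and let $M,N\in\Alg\LLL$ be finite rank. By decomposability of $\Alg\LLL$ (\Cref{finitedecomposable}) every finite rank operator in $\Alg\LLL$ lies in $\mathrm{R_1}(\Alg\LLL)$, and by \Cref{rank 1} this is contained in $\mathrm{I_d}(\Alg\LLL)$; hence I may write
$$
M=\sum_{i=1}^{p}\alpha_i M_i,\qquad N=\sum_{j=1}^{q}\beta_j N_j,
$$
where each $M_i,N_j$ is an idempotent in $\Alg\LLL$ and $\alpha_i,\beta_j\in\Cb$. The claimed identity
$$
\delta(MAN)=M\delta(AN)+\delta(MA)N-M\delta(A)N
$$
is multilinear in the pair $(M,N)$: both sides are linear in $M$ for fixed $N$ and linear in $N$ for fixed $M$. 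Since \Cref{finiteidempotent} supplies the identity whenever $M$ and $N$ are individual idempotents $M_i,N_j$ (noting that $\Alg\LLL$ is unital, so the lemma applies with $\mathscr{H}=\mathscr{K}$), the general identity for $M=\sum\alpha_iM_i$ and $N=\sum\beta_jN_j$ follows by expanding both sides and matching the $\alpha_i\beta_j$ coefficients termwise.

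Concretely, I would carry out the expansion in two steps to keep the bilinearity transparent. Fixing $N$ first, I would prove the identity holds whenever $N$ is a single idempotent and $M$ ranges over all finite rank operators, by writing $M=\sum_i\alpha_iM_i$ and using the linearity of $\delta$ together with the case-by-case application of \Cref{finiteidempotent} to each pair $(M_i,N)$; the cross terms reorganize exactly because each of the three terms on the right is linear in $M$. Then, with this intermediate identity in hand for arbitrary finite rank $M$ and idempotent $N$, I would repeat the argument in the second slot, writing $N=\sum_j\beta_jN_j$ and invoking linearity of $\delta$ and of each term in $N$. Combining the two steps yields the formula for arbitrary finite rank $M$ and $N$.

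The only genuinely delicate point is the bookkeeping in the bilinear expansion: one must verify that the three terms $M\delta(AN)$, $\delta(MA)N$, and $M\delta(A)N$ each distribute correctly over the decompositions of $M$ and $N$, so that the coefficient of $\alpha_i\beta_j$ on the left equals the corresponding combination on the right. This is routine once one observes that $\delta$ is linear (a local derivation is by definition linear) and that left and right multiplication are linear, but it is the step where a sign or an index error would creep in, so I would write out the double sum explicitly at least once. No new structural input about the lattice is needed beyond what \Cref{rank 1} and \Cref{finitedecomposable} already provide; the corollary is essentially a linearization of \Cref{finiteidempotent} against the idempotent-generation property.
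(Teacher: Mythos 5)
Your proposal is correct and follows essentially the same route as the paper's proof: use decomposability (\Cref{finitedecomposable}) to write finite rank operators as sums of rank one operators, use \Cref{rank 1} to place these in the span of idempotents, and then linearize \Cref{finiteidempotent} in each of the two slots. The paper compresses the bilinear expansion into a one-line appeal to ``the linearity of local derivations,'' whereas you spell it out, but the content is identical.
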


\begin{proof} We only prove the case when $\LLL=\LLL_{n_0}$. 

Given $n\in\Nb$ and $T\in\Alg\LLL$ be a rank $n$ operator. Since $\Alg\LLL_{0}$ is decomposable, it follows from Lemma \ref{finitedecomposable} that $\Alg\LLL$ is decomposable. Then there are some rank 1 operators $x_{i}\otimes f_{i}\in\Alg\LLL$ such that $T=\sum_{i=1}^{n}x_{i}\otimes f_{i}$. Therefore for any finite rank operators $M,N\in\Alg\LLL$, it follows from Lemma \ref{finiteidempotent}, Lemma \ref{rank 1} and the linear of local derivations that
$$
\delta(MAN)=M\delta(AN)+\delta(MA)N-M\delta(A)N
$$
for any $A,M,N\in\Alg\LLL$ and $M,N$ are finite rank operators.
\end{proof}

\begin{lem}\label{AT}(see \cite{Chen}, \cite{Yang})
Let $T\in\Alg\LLL$ be nonzero element. If $P\nless P_{-}^{\LLL_{0}}$ for any nonzero projection $P\in\LLL_{0}$, then there is a rank 1  operator $A\in\Alg\LLL$ such that $AT\neq0$.
\end{lem}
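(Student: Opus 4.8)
My plan is to reduce the statement to a concrete question about the second tensor factors of rank one operators, and then to exhibit a suitable rank one operator explicitly. The starting point is the standard description of the rank one operators in $\Alg\LLL$: a rank one operator $x\otimes f$ lies in $\Alg\LLL$ if and only if there is some $P\in\LLL$ with $x\in P$ and $f\in P_-^{\perp}$ (equivalently, taking $P$ to be the smallest element of $\LLL$ containing $x$, membership of $x\otimes f$ is exactly the condition $f\perp P_-$). For $A=x\otimes f$ one has $AT=x\otimes(T^{*}f)$, so $AT\neq0$ precisely when $x\neq0$ and $T^{*}f\neq0$, and the latter is equivalent to $f\not\perp\Ran T$. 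Thus it suffices to produce a single nonzero $P\in\LLL$ and a vector $f\in P_-^{\perp}$ that is not orthogonal to $\Ran T$; any nonzero $x\in P$ then furnishes the desired $A=x\otimes f$.

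To find such an $f$ I would fix $\xi$ with $T\xi\neq0$, writing $(T\xi)_i$ for the coordinates of $T\xi$. When $\LLL=\LLL_{n_0}$ the vector $T\xi$ automatically has finite support; when $\LLL=\LLL_{\infty}$ I would take $\xi$ concentrated in a single summand $\HHH_k$, so that the upper triangular form $T=\sum_{i\le j}E_{ij}\otimes T_{ij}$ forces $(T\xi)_i=T_{ik}\xi_k$ to vanish for $i>k$ and hence $T\xi$ to have finite support (this is exactly where the otherwise troublesome behaviour $(Q_\eta)_-=I$ is circumvented). In either case let $m$ be the largest index with $(T\xi)_m\neq0$. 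If $m=n_0$, which can occur only when $\LLL=\LLL_{n_0}$, I use $P=Q_\zeta$, with $(Q_\zeta)_-=F_{n_0-1}$, and take $f$ concentrated in the last coordinate equal to $(T\xi)_{n_0}$. If instead $2\le m$ and $F_m\in\LLL$ — that is $2\le m<n_0$ in the finite case and any $m\ge2$ in the infinite case — I use $P=F_m$, with $(F_m)_-=F_{m-1}\vee Q$ and $Q\in\{Q_\zeta,Q_\eta\}$; the admissible $f$ are those supported in coordinates $\ge m$ subject to the single linear constraint expressing $f\perp Q$, and I set $f_m=(T\xi)_m$ and choose the $(m{+}1)$st entry to cancel that constraint, all other entries zero. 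In each of these cases maximality of $m$ gives $\langle T\xi,f\rangle=\|(T\xi)_m\|^2\neq0$, so $f\not\perp\Ran T$, as required.

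The remaining case $m=1$ is where the hypothesis on $\LLL_0$ enters and is, I expect, the crux. Here $(T\xi)_1\neq0$ while all higher coordinates vanish, and the only available lattice elements are $P=E_{11}\otimes P'$ with $P'\in\LLL_0$, for which $(E_{11}\otimes P')_-=(E_{11}\otimes(P')_-^{\LLL_0})\vee Q$; the admissible $f$ are those whose first coordinate satisfies $f_1\in\big((P')_-^{\LLL_0}\big)^{\perp}$ together with the constraint $f\perp Q$, which I again arrange using the second coordinate. Everything therefore reduces to finding a single nonzero $P'\in\LLL_0$ and $f_1\in\big((P')_-^{\LLL_0}\big)^{\perp}$ with $\langle(T\xi)_1,f_1\rangle\neq0$, which is possible exactly when $(T\xi)_1\notin\bigwedge_{0\neq P'\in\LLL_0}(P')_-^{\LLL_0}$. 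The heart of the matter is thus the claim that the hypothesis forces $\bigwedge_{0\neq P'\in\LLL_0}(P')_-^{\LLL_0}=0$.

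To prove this claim I would set $R=\bigwedge_{0\neq P'\in\LLL_0}(P')_-^{\LLL_0}$ and assume $R\neq0$. If there is a nonzero $S\in\LLL_0$ with $S<R$, then $S\neq0$ yields $R\leq S_-^{\LLL_0}$, so $S<S_-^{\LLL_0}$, contradicting the hypothesis at $S$. Hence $R$ must be an atom; but then, taking $P'=R$, we get $R\leq R_-^{\LLL_0}$, so the hypothesis forces $R=R_-^{\LLL_0}=\vee\{Q\in\LLL_0:Q\ngeqslant R\}$, whence every $Q\ngeqslant R$ satisfies $Q\leq R$ and, $R$ being an atom, $Q=0$; this gives $R_-^{\LLL_0}=0\neq R$, a contradiction. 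Therefore $R=0$, the case $m=1$ is settled, and the two lattices are handled in parallel. The only essential difference between them is the single coordinate choice of $\xi$ needed for $\LLL_{\infty}$ to make $m$ well defined; the main obstacle is the structural claim about $\LLL_0$ just described, which is precisely the step that uses the assumption $P\nless P_-^{\LLL_0}$.
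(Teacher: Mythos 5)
The paper does not actually prove this lemma: it is imported by citation from \cite{Chen} and \cite{Yang}, so there is no in-house argument to compare yours against. Judged on its own, your proof is correct and complete. The reduction is sound: for $A=x\otimes f$ one has $AT=x\otimes T^{*}f$, so it suffices to produce a nonzero $P\in\LLL$, a nonzero $x\in P$, and an $f\in P_{-}^{\perp}$ with $\langle T\xi,f\rangle\neq0$ for some fixed $\xi$ with $T\xi\neq0$ (only the elementary ``if'' direction of the rank-one criterion is needed). Your choice of $\xi$ supported in a single summand makes the top nonzero coordinate $m$ of $T\xi$ well defined in the $\LLL_{\infty}$ case and correctly sidesteps the fact that $(Q_{\eta})_{-}=I$ makes $Q_{\eta}$ useless for building rank ones there. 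The case split on $m$ is exhaustive: $Q_{\zeta}$ with $(Q_{\zeta})_{-}=F_{n_{0}-1}$ handles $m=n_{0}$; $F_{m}$ with the two-coordinate correction enforcing $f\perp Q_{\zeta}$ (resp.\ $Q_{\eta}$) handles $2\le m$, where maximality of $m$ guarantees the correcting $(m{+}1)$st entry does not perturb $\langle T\xi,f\rangle$; and $E_{11}\otimes P'$ handles $m=1$. The structural claim that the hypothesis forces $R:=\bigwedge_{0\neq P'\in\LLL_{0}}(P')_{-}^{\LLL_{0}}=0$ is argued correctly: $R\in\LLL_{0}$ by completeness; a nonzero $S<R$ would give $S<S_{-}^{\LLL_{0}}$; and if $R$ is a minimal nonzero element, then $R\le R_{-}^{\LLL_{0}}$ combined with $R\nless R_{-}^{\LLL_{0}}$ forces $R=R_{-}^{\LLL_{0}}$, whereupon every $Q\ngeqslant R$ lies below $R$ and hence equals $0$, giving $R_{-}^{\LLL_{0}}=0$, a contradiction. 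This is exactly where the hypothesis is consumed, and it is the right place for it. I see no gaps.
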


\begin{lem}\label{TB}
Let $T\in\Alg\LLL$ be a nonzero operator. Then there is some rank 1 operator $B\in\Alg\LLL$ such that $TB\neq0$.
\end{lem}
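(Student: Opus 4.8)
The plan is to reduce the statement to a spanning property of the ranges of the rank-one operators contained in $\Alg\LLL$. Recall, as in the proof of Proposition \ref{rank 1}, that a rank-one operator $x\otimes f$ (acting by $z\mapsto\langle z,f\rangle x$) lies in $\Alg\LLL$ with $f\neq 0$ exactly when there is some $E\in\LLL$ with $x\in E$ and $f\in E_-^{\perp}$, and such an $f\neq 0$ exists precisely when $E_-\neq\mathscr{K}$. Since $T(x\otimes f)=(Tx)\otimes f$, the operator $TB$ with $B=x\otimes f$ is nonzero if and only if $Tx\neq 0$. So it is enough to find a single vector $x$ lying in some $E\in\LLL$ with $E_-\neq\mathscr{K}$ and with $Tx\neq 0$; any nonzero $f\in E_-^{\perp}$ then gives the required $B=x\otimes f$.

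Write $\mathcal{R}=\bigcup\{E\in\LLL : E_-\neq\mathscr{K}\}$ for the union of all such ranges. The crux is to show $\overline{\spn}\,\mathcal{R}=\mathscr{K}$. Granting this, $\Ker T$ is a proper closed subspace because $T\neq 0$, hence $\mathcal{R}\not\subseteq\Ker T$; picking $x\in\mathcal{R}\setminus\Ker T$ finishes the proof by the previous paragraph. Thus the whole lemma rests on exhibiting enough members of $\mathcal{R}$.

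First I would treat the cases where this is immediate. For $\LLL=\LLL_\infty$, each $E_k$ with $k\geq 2$ satisfies $(E_k)_-=E_{k-1}\vee Q_\eta$, and the nonzero vector supported in blocks $k,k+1$ with entries $\overline{b_{k+1}}w$ and $-\overline{b_k}w$ is orthogonal to $E_{k-1}\vee Q_\eta$; hence $(E_k)_-\neq\mathscr{K}$, every $E_k$ lies in $\mathcal{R}$, and $\bigcup_{k\geq 2}E_k$ is dense, so $\overline{\spn}\,\mathcal{R}=\mathscr{K}$. For $\LLL=\LLL_{n_0}$ with $n_0\geq 3$ the same computation gives $(F_k)_-=F_{k-1}\vee Q_\zeta\neq\mathscr{K}$ for $2\leq k\leq n_0-1$ and $(Q_\zeta)_-=F_{n_0-1}\neq\mathscr{K}$, so $F_{n_0-1}$ and $Q_\zeta$ both belong to $\mathcal{R}$; since every vector of $\mathscr{K}$ is the sum of an element of $F_{n_0-1}$ and a multiple of a vector in $Q_\zeta$ (solve for $z$ in the last block, where $a_{n_0}\neq 0$), we get $F_{n_0-1}\vee Q_\zeta=\mathscr{K}$ and the claim follows.

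The hard part will be the boundary case $\LLL=\LLL_{n_0}$ with $n_0=2$, where no $F_k$ ($2\leq k\leq n_0-1$) is available and $Q_\zeta$ only yields $Q_\zeta\subseteq\mathcal{R}$. Here the first block must be recovered through the subspaces $E_{11}\otimes P$, for which $(E_{11}\otimes P)_-=(E_{11}\otimes P_-^{\LLL_0})\vee Q_\zeta\neq\mathscr{K}$ precisely when $P_-^{\LLL_0}\neq\HHH$. So the first-block part of $\mathcal{R}$ is controlled by the rank-one ranges of $\Alg\LLL_0$: when $T$ annihilates $Q_\zeta$ (equivalently, when the common diagonal value of $T$ vanishes) one is reduced to finding $x_1\in\HHH$ with $x_1\otimes g\in\Alg\LLL_0$ and $T_{11}x_1\neq 0$, i.e. to knowing that the rank-one ranges of $\Alg\LLL_0$ span $\HHH$. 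This last density is not automatic from reflexivity alone, but it does follow from the hypothesis used in Lemma \ref{AT}, namely $P\nless P_-^{\LLL_0}$ for every nonzero $P\in\LLL_0$: that hypothesis forces $P_-^{\LLL_0}\neq\HHH$ for every nonzero proper $P$, so every such $P$ contributes its full range and $\overline{\spn}\bigcup\{P\in\LLL_0 : P_-^{\LLL_0}\neq\HHH\}=\HHH$. Feeding this back settles $n_0=2$ and completes the lemma.
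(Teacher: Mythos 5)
Your argument is correct, and at its core it is the same strategy as the paper's: produce a lattice element $E$ with $E_-\neq\mathscr{K}$ on which $T$ does not vanish, and set $B=x\otimes f$ with $x\in E$, $Tx\neq 0$, and $0\neq f\in E_-^{\perp}$. The paper implements this by writing $I=F_{n_0-1}\vee Q_{\zeta}$ and choosing whichever of the two summands $T$ does not annihilate; your reformulation in terms of the density of the span of $\mathcal{R}=\bigcup\{E\in\LLL:E_-\neq\mathscr{K}\}$ is the same idea made uniform. Where you genuinely add value is the case $n_0=2$: there $F_{n_0-1}=E_{11}\otimes I$ and $(F_1)_-=(E_{11}\otimes I_-^{\LLL_0})\vee Q_{\zeta}$ can equal $\mathscr{K}$ (for instance when $\LLL_0$ is a continuous nest), in which case the paper's choice of a nonzero $f\in(F_{n_0-1})_-^{\perp}$ is impossible and its proof as written has a gap; you detect this and repair it by descending to the subspaces $E_{11}\otimes P$ and invoking the standing hypothesis $P\nless P_-^{\LLL_0}$ to guarantee $P_-^{\LLL_0}\neq\HHH$ for proper nonzero $P$, together with the dichotomy that either $I_-^{\LLL_0}\neq\HHH$ (so $F_1\in\mathcal{R}$ directly) or $I_-^{\LLL_0}=\HHH$ (so the proper $P$'s already span). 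The one caveat is that this hypothesis is not part of the statement of Lemma \ref{TB} itself, only of the theorem in which the lemma is used; so strictly you prove the $n_0=2$ case under an extra assumption. Since that assumption is in force wherever the lemma is applied, and since your proof needs nothing extra for $n_0\geq 3$ or for $\LLL_{\infty}$, this does not weaken the paper's conclusions --- if anything, it flags an edge case the authors should address.
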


\begin{proof} We only prove the case when $\LLL=\LLL_{n_0}$. 

Since $I=F_{n_{0}-1}\vee Q_{\zeta}$, then we have $TF_{n_{0}-1}\neq0$ or $TQ_{\zeta}\neq0$. 
If $TF_{n_{0}-1}\neq0$, there is some $z\in F_{n_{0}-1}$ and $f\in (F_{n_{0}-1})_{-}^{\perp}$ such that $Tz\neq0$ and $z\otimes f\in\Alg\LLL$. Let $B=z\otimes f$. Then $T(z\otimes f)=(Tz)\otimes f\neq 0$. If $TQ_{\zeta}\neq0$, then there is some $\xi\in Q_{\zeta}$ and $g\in(Q_{\zeta})_{-}^{\perp}$ such that $T\xi\neq0$ and $x\otimes g\in\Alg\LLL$. Let $B=\xi\otimes g$. Then $T(\xi\otimes g)=(T\xi)\otimes g\neq 0$. This proved the Lemma.
\end{proof}

\begin{thm}
Let $\Alg\LLL{_0}$ be a decomposable algebra and ${\rm{R_1}}(\Alg\LLL_{0})\subset {\rm{I_d}}(\Alg\LLL_{0})$. If $P\nless P_{-}^{\LLL_{0}}$ for any nonzero projection $P\in\LLL_{0}$, then every local derivation from $\Alg\LLL$ into $\BBB(\mathscr{K})$ is a derivation.
\end{thm}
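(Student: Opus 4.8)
The plan is to prove the equivalent statement that the \emph{derivation defect}
\[
\Delta(S,T) = \delta(ST) - \delta(S)T - S\delta(T), \qquad S,T\in\Alg\LLL,
\]
vanishes for all $S,T$. Since $\Alg\LLL_0$ is decomposable and ${\rm R_1}(\Alg\LLL_0)\subset{\rm I_d}(\Alg\LLL_0)$, Lemma \ref{finitedecomposable} and Proposition \ref{rank 1} guarantee that $\Alg\LLL$ is decomposable with ${\rm R_1}(\Alg\LLL)\subset{\rm I_d}(\Alg\LLL)$; in particular every finite rank operator of $\Alg\LLL$ is a finite sum of rank $1$ operators of $\Alg\LLL$, so Corollary \ref{finite rank} applies. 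First I would specialize Corollary \ref{finite rank} to $A=I$ and use $\delta(I)=0$ (Lemma \ref{I-0}) to record the finite rank derivation identity $\delta(MN)=\delta(M)N+M\delta(N)$ for all finite rank $M,N\in\Alg\LLL$; equivalently $\Delta(M,N)=0$ on the ideal of finite rank operators.

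The decisive intermediate step is the one-sided identity $\Delta(S,F)=0$, i.e. $\delta(SF)=\delta(S)F+S\delta(F)$, for arbitrary $S\in\Alg\LLL$ and finite rank $F\in\Alg\LLL$. To prove it I would multiply $\Delta(S,F)$ on the left by an arbitrary finite rank $M\in\Alg\LLL$ and expand $\delta(MSF)$ in two ways: once through Corollary \ref{finite rank} (with outer factors $M$ and $F$), and once through the finite rank derivation identity of Step~1 applied to the finite rank operators $MS$ and $F$. Comparing the two expansions collapses everything to $M\,\Delta(S,F)=0$. As $M$ ranges over all rank $1$ operators $a\otimes f\in\Alg\LLL$ and $(a\otimes f)\Delta(S,F)=a\otimes(\Delta(S,F)^{*}f)$, this says $\Delta(S,F)^{*}f=0$ for every admissible right vector $f$; the hypothesis $P\nless P_{-}^{\LLL_0}$ is exactly what makes these right vectors total in $\mathscr{K}$, which is the content of Lemma \ref{AT}, and hence $\Delta(S,F)=0$.

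With the one-sided identity in hand I would treat arbitrary $S,T$ by multiplying $\Delta(S,T)$ on the right by a finite rank $N\in\Alg\LLL$ and repeatedly applying $\Delta(\,\cdot\,,F')=0$ for the finite rank right factors $F'=N$ and $F'=TN$ to rewrite $\delta(ST)N$, $\delta(S)TN$ and $S\delta(T)N$. A telescoping cancellation then gives $\Delta(S,T)N=0$ for every finite rank $N$. Since $N$ runs over all rank $1$ operators $z\otimes g\in\Alg\LLL$ and $\Delta(S,T)(z\otimes g)=(\Delta(S,T)z)\otimes g$, this forces $\Delta(S,T)z=0$ for every admissible left vector $z$; these left vectors span a dense subspace of $\mathscr{K}$ (for $\LLL_{n_0}$ they exhaust $F_{n_0-1}\cup Q_\zeta$, whose closed span is $I$, which is the mechanism behind Lemma \ref{TB}), whence $\Delta(S,T)=0$ and $\delta$ is a derivation.

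I expect the separation steps — passing from ``$\Delta$ is annihilated by every rank $1$ operator of $\Alg\LLL$'' to ``$\Delta=0$'' — to be the main obstacle, for two reasons. First, $\Delta(S,T)$ lies in $\BBB(\mathscr{K})$ and need not belong to $\Alg\LLL$, so Lemmas \ref{AT} and \ref{TB} (phrased for operators in $\Alg\LLL$) must be used in the stronger form ``the left, respectively right, vectors of rank $1$ operators of $\Alg\LLL$ are total in $\mathscr{K}$'', which is what their proofs actually deliver. Second, it is precisely the left-separation, requiring the predecessors to satisfy $\bigwedge_{P}P_{-}=0$, that forces the hypothesis $P\nless P_{-}^{\LLL_0}$; the right-separation holds unconditionally for both $\LLL_{n_0}$ and $\LLL_{\infty}$. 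The remaining work — the two algebraic expansions and the telescoping in Steps~2 and~3 — is routine once Corollary \ref{finite rank} and Step~1 are in place.
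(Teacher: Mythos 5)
Your proposal is correct and follows essentially the same route as the paper's proof: specialize Corollary \ref{finite rank} at $A=I$ to get the derivation identity on finite rank operators, compare the two expansions of $\delta(MAN)$ to obtain $M\bigl(\delta(AN)-\delta(A)N-A\delta(N)\bigr)=0$, remove $M$ via the left-separation of Lemma \ref{AT}, then telescope and remove the right finite rank factor via Lemma \ref{TB}. Your remark that the defect $\Delta(S,T)$ lives in $\BBB(\mathscr{K})$ rather than $\Alg\LLL$, so the separation lemmas must be read in their stronger ``total family of vectors'' form, is a careful refinement of a point the paper passes over silently, but it does not change the argument.
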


\begin{proof} We only prove the case when $\LLL=\LLL_{n_0}$. 
It follows from Proposition \ref{finite rank} that
\begin{equation}\label{three}
\delta(MAN)=M\delta(AN)+\delta(MA)N-M\delta(A)N
\end{equation}
for any $A,M,N\in\Alg\LLL$ and $M,N$ are finite rank operators. Let $A=I$ in Equation (\ref{three}). Then we have
\begin{equation}\label{two}
\delta(MN)=M\delta(N)+\delta(M)N
\end{equation}
for any finite rank operators $M,N\in\Alg\LLL$. Replaced $M$ by $MA$ in Equation (\ref{two}), we obtain
\begin{equation}\label{two to there}
\delta(MAN)=MA\delta(N)+\delta(MA)N
\end{equation}
for any finite rank operators $M,N\in\Alg\LLL$. Combining with Equation (\ref{three}) and (\ref{two to there}), we have
$$
M\delta(AN)=M\delta(A)N+MA\delta(N).
$$
By Lemma \ref{AT}, we have
$$
\delta(AN)=\delta(A)N+A\delta(N)
$$
for any $A,N\in\Alg\LLL$ and $N$ is finite rank operator. It follows that
\begin{align*}
\delta(AB)N&=\delta(ABN)-AB\delta(N)\\
&=\delta(A)BN+A\delta(BN)-AB\delta(N)\\
&=\delta(A)BN+A\delta(B)N
\end{align*}
for any $A,B,N\in\Alg\LLL$ and $N$ is finite rank operator. It follows from Lemma \ref{TB} that
$$
\delta(AB)=\delta(A)B+A\delta(B)
$$
for any $A,B\in\Alg\LLL$. Hence $\delta$ is a derivation.
\end{proof}

\end{document}